\newtheorem{theorem}{Theorem}[section]
\newtheorem{proposition}[theorem]{Proposition}
\newtheorem{corollary}[theorem]{Corollary}
\newtheorem{lemma}[theorem]{Lemma}
\theoremstyle{definition}
\newtheorem{definition}[theorem]{Definition}
\newtheorem{question}[theorem]{Question}
\newtheorem{remark}[theorem]{Remark}
\renewcommand{\AA}{\mathbb{A} }
\newcommand{\FF}{\mathbb{F} }
\newcommand{\PP}{\mathbb{P} }
\newcommand{\bp}{\mathbf{p} }
\newcommand{\bv}{\mathbf{v} }
\title{On algebraic space filling curves}
\author[A. Campbell]{Alana Campbell}
\address{Alana Campbell, Department of Mathematics, Fordham University, New York, NY 10023}
\email{acampbell50@fordham.edu}
\author[F. Dedvukaj]{Flora Dedvukaj}
\address{Flora Dedvukaj, Department of Mathematics, Fordham University, New York, NY 10023}
\email{fdedvukaj@fordham.edu}
\author[D. McCormick III]{Donald McCormick III}
\address{Donald McCormick III, Department of Mathematics, Fordham University, New York, NY 10023}
\email{dmccormick12@fordham.edu}
\author[H.-B. Moon]{Han-Bom Moon}
\address{Han-Bom Moon, Department of Mathematics, Fordham University, New York, NY 10023}
\email{hmoon8@fordham.edu}
\author[J. Morales]{Joshua Morales}
\address{Joshua Morales, Department of Mathematics, Fordham University, New York, NY 10023}
\email{jmorales77@fordham.edu}
\date{\today}
\begin{document}

\maketitle

\begin{abstract}
Poonen and Gabber independently showed that any smooth geometrically irreducible projective scheme over a finite field has a smooth space filling curve, that is, a smooth curve defined over the field and passes through all points over the field. However, except the case of projective plane, no concrete example was found in literature. In this note, we construct explicit examples of algebraic space filling curves in three dimensional projective space, in particular the ones with minimum degree. 
\end{abstract}

\section{Introduction}

Since Peano's construction \cite{Pea90} in the late 19-th century, there have been many examples of curves passing through every point of higher dimensional manifolds. In analysis, a \emph{space filling curve} of a manifold $M$ (possibly with boundaries) is a continuous surjective map $f : [0, 1] \to M$. By Hahn-Mazurkiewicz theorem, any connected compact manifold admits a space filling curve. 

As one may expect, a space filling curve cannot have simple geometric structure. If $\dim M \ge 2$, it is not injective (otherwise it will give a homeomorphism between $M$ and $[0, 1]$), it is not differentiable and has fractal nature. 

In algebraic geometry, we may explore vast new geometric spaces beyond topological manifolds over real or complex numbers. Many constructions and geometric intuitions for manifolds can be extended to schemes over arbitrary fields, including finite fields. On this generality, we can see new fascinating geometric phenomena. One of them is the existence of a \emph{smooth} space filling curve that is \emph{embedded} in a given smooth scheme over a finite field. 

Let $\FF_q$ be a finite field of order $q = p^r$, where $p$ is a prime number. Let $X$ be a projective smooth scheme defined over $\FF_q$. We denote the set of $\FF_q$-rational points by $X(\FF_q)$. Then $X(\FF_q)$ is a finite set. We say a curve $C \subset X$ is a \emph{space filling curve} over $\FF_q$ if $C(\FF_q) = X(\FF_q)$. 

In \cite{Kat99}, Katz constructed a smooth space filling curve for an affine space $\AA^n$ over $\FF_q$. His construction crucially depends on the existence of a high degree \'etale map $\AA^1 \to \AA^1$, which cannot be extended to the projective case. So he asked if one can construct a smooth space filling curve for any smooth geometrically integral projective schemes over $\FF_q$. Gabber and Poonen gave an affirmative answer independently \cite{Gab01, Poo04}. See the precise statement in Theorem \ref{thm:Poonen}. However, their proofs do not provide concrete examples, but show the existence. Furthermore, finding examples with low complexity (such as degree) is also an interesting question. 

In this paper, we focus on the question constructing explicit examples of space filling curves of a projective space. The simplest non trivial case is $\PP^2$. In \cite{HK13}, Homma and Kim constructed a smooth space filling curve $C \subset \PP^2$ for every $\FF_q$. Indeed, they verified that Tallini's example of an irreducible plane filling curve \cite{Tal61} is indeed smooth. Note that in this case, the space filling curve is a hypersurface, hence it is sufficient to find one equation.

The main result of this paper is providing examples of smooth space filling curves of $\PP^3$ over $\FF_q$ with small order $q$. 

\begin{theorem}\label{thm:mainthm}
Over $\FF_q$ with $q \le 7$, there is a complete intersection smooth space filling curve in $\PP^3$ of degree $(q+1)(q+2)$.
\end{theorem}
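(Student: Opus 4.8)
The plan is to realize $C$ as the intersection of a surface $S_1$ of degree $q+1$ with a surface $S_2$ of degree $q+2$, each passing through every point of $\PP^3(\FF_q)$, and then to certify by an explicit computation that this intersection is a smooth curve; being a connected complete intersection of positive dimension, such a $C$ is automatically irreducible.

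First I recall the standard description of the point set $Z := \PP^3(\FF_q)$. A point $[a_0:a_1:a_2:a_3]$ of $\PP^3$ over $\overline{\FF}_q$ is $\FF_q$-rational precisely when the $2 \times 4$ matrix with rows $(a_0,a_1,a_2,a_3)$ and $(a_0^q,a_1^q,a_2^q,a_3^q)$ has rank one, that is, when all the forms
\[
 G_{ij} \;=\; x_i x_j^q - x_j x_i^q, \qquad 0 \le i < j \le 3,
\]
vanish at it. These six forms of degree $q+1$ vanish exactly on $Z$, and so does every $\FF_q$-combination of the degree-$(q+2)$ forms $x_k G_{ij}$. Consequently, for any $S_1$ in the linear span of the $G_{ij}$ and any $S_2$ in the span of the $x_k G_{ij}$, the scheme $C := S_1 \cap S_2$ automatically contains all of $Z$; if in addition the two defining forms $F_1,F_2$ form a regular sequence, then $C$ is a complete intersection curve of degree $(q+1)(q+2)$, hence a space filling curve of exactly the asserted degree. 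The whole problem thus reduces to choosing the coefficients so that $C$ is moreover \emph{smooth}.

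The key step is the selection and verification of explicit $F_1,F_2$. I would search among forms with cyclic symmetry under $x_i \mapsto x_{i+1}$ (indices mod $4$) --- for instance $F_1$ a short $\FF_q$-combination of the $G_{ij}$ and $F_2$ a similarly structured element of the span of the $x_k G_{ij}$ --- since the symmetry both shrinks the search and tends to disperse the singularities enough that they can be cleared. For a candidate pair one then checks, over $\overline{\FF}_q$, that (i) $\dim V(F_1,F_2)=1$, so $C$ is genuinely a complete intersection curve of degree $(q+1)(q+2)$ and not something of larger dimension or with extra components, and (ii) the Jacobian criterion holds: the ideal generated by $F_1$, $F_2$, and the $2\times 2$ minors of the $2\times 4$ Jacobian matrix $\bigl(\partial F_k/\partial x_j\bigr)$ is the irrelevant ideal, so $C$ has no singular point even geometrically. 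Both are Gr\"obner-basis computations, made tractable by the sparsity coming from $\partial(x_i^q)/\partial x_i = q x_i^{q-1} = 0$ in characteristic $p$: each $\partial F_k/\partial x_j$ is a linear form in $x_0^q,\dots,x_3^q$.

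The principal obstacle, and the reason the statement is limited to $q\le 7$, is twofold. Conceptually, there is no Bertini theorem over the \emph{finite} field $\FF_q$ in the naive form ``a general member of $|I_Z(q+1)|$ is smooth away from $Z$'', so genericity cannot simply be invoked: most coefficient choices yield a curve singular either at a point of $Z$ or at a spurious point of $\PP^3\setminus Z$, and an explicit witness must be exhibited and checked. Computationally, $F_1$ and $F_2$ have degrees $q+1$ and $q+2$ in four variables, so verifying (i) and (ii) becomes impractical beyond $q=7$. Carrying out the construction and verification for each prime power $q\in\{2,3,4,5,7\}$, together with the point count above, completes the proof.
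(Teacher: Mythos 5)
Your proposal follows essentially the same route as the paper: intersect a degree-$(q+1)$ space filling surface with a degree-$(q+2)$ one drawn from the ideal of $\PP^3(\FF_q)$, certify smoothness of the complete intersection over $\overline{\FF}_q$ via the $2\times 2$ minors of the Jacobian in a computer algebra system, and deduce irreducibility from connectedness of complete intersections. The only substantive differences are that the paper finds its second surface by random search (the explicit witnesses appear in Table \ref{tbl:spacefillingcurve}, with $g=\varphi_{xy}+\varphi_{zw}$ fixed as the degree-$(q+1)$ surface) rather than by a cyclic-symmetry ansatz, and that your sketch stops short of exhibiting the explicit polynomials, which is the actual content of the verification for each $q\le 7$.
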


In Corollary \ref{cor:minimumdegree}, we show that $(q+1)(q+2)$ is the smallest possible degree of a complete intersection space filling curve. Thus, at least among complete intersections, our examples are optimal. 

Unfortunately, we were unable to prove the existence of such a minimal degree space filling curve for arbitrary $\FF_q$. However, we expect that such a curve always exist. We leave our attempt to prove it, and encountered challenge, in Section \ref{sec:existence}.

\begin{question}\label{que:minimaldegree}
Can we always find a smooth space filling curve in $\PP^3$ over $\FF_q$ of degree $(q+1)(q+2)$?
\end{question}

This paper is organized as follows. In Section \ref{sec:prerequisite}, we give a precise definition of a space filling curve and known example for $\PP^2$. We describe our basis strategy in Section \ref{sec:spacefillingsurface}. As a first step, it is important to study space filling surfaces. The section is a study of them. In Section \ref{sec:numericalexamples}, using a random construction with a computer algebra system, we give examples of space filling curves with low degree, and complete the proof of Theorem \ref{thm:mainthm}. Finally, in the last section, we describe our attempt to answer Question \ref{que:minimaldegree}.

\section{Schemes over finite fields and Space filling curves}\label{sec:prerequisite}

In this section, we review basic facts on algebraic geometry over finite fields and space filling curves. 

\subsection{Schemes over finite fields}\label{ssec:schemes}

Let $\FF_q$ be the finite field of order $q = p^r$, for some prime number $p$. A \emph{projective $\FF_q$-scheme} is a common zero set of finitely many homogeneous polynomials $f_1, f_2, \cdots, f_k \in \FF_q[x_0, \cdots, x_n]$. We denote it by $V(f_1, \cdots, f_k)$, and we have an embedding $V(f_1, \cdots, f_k) \subset \PP^n$. Using not necessarily homogeneous polynomials, we can define an \emph{affine $\FF_q$-scheme} in the same way, and we retain the same notation if there is no chance of confusion.

If we denote the ideal generated by $f_1, \cdots, f_k$ by $I$, then the associated scheme depends only on the ideal $I$, hence we use the notation $X = V(I)$. Conversely, for a scheme $X \subset \PP^n$, we denote its associate ideal as $I(X)$.

The distinction between the scheme and the set of points is important on algebraic geometry over finite fields. Let $X = V(f_1, \cdots, f_k)$ be a projective $\FF_q$-scheme. A point $\bp \in X$ is a point in the homogeneous coordinate $(p_0 : p_1 : \cdots : p_n)$ such that 
\begin{enumerate}
    \item $f_i(p_0, p_1, \cdots, p_n) = 0$ for all $1 \le i \le k$;
    \item all entries are in an extension field $K$ of $\FF_q$. 
\end{enumerate}
In this case, we say that $\bp$ is a \emph{$K$-rational point} (or simply $K$-point) of $X$. The set of $K$-rational points in $X$ are denoted by $X(K)$.

Because we are working over a finite field $\FF_q$, the set of all $\FF_q$-points in $\PP^n$ is a finite set, therefore, for any projective variety $X$, $|X(\FF_q)| < \infty$. Note that, however, if $\dim X > 0$, the set of all points over $X$ is infinite, as over the algebraic closure $\overline{\FF}_q$, $|X(\overline{\FF}_q)| = \infty$. 

Finally, for two projective schemes $X = V(I), Y = V(J)$ in $\PP^n$ over $\FF_q$, we say $X$ is a subscheme of $Y$ (or $X \subset Y$) if $J \subset I$. This implies $X(K) \subset Y(K)$ for all extension field $K$ of $\FF_q$. Note that, however, $X(K) \subset Y(K)$ for all $K$ does not imply $X \subset Y$.

\subsection{Existence of space filling curves}\label{ssec:spacefillingcurve}

In \cite{Poo04}, by applying his celebrated Bertini theorem over finite fields, Poonen proved the following result. 

\begin{theorem}[\protect{\cite[Corollary 3.5]{Poo04}}]\label{thm:Poonen}
Let $X$ be a smooth, projective, geometrically integral scheme of dimension $m \ge 1$ over $\FF_q$. Then there exists a smooth, projective, geometrically integral curve $C \subset X$ such that $C(\FF_q) = X(\FF_q)$. 
\end{theorem}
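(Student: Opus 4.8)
The plan is to prove the theorem by induction on $m = \dim X$, peeling off one hypersurface section at a time. If $m = 1$ there is nothing to prove: take $C = X$. Assume $m \ge 2$ and that the statement holds in dimension $m-1$. It is enough to find a single hypersurface $H \subset \PP^n$ for which $Y := H \cap X$ is smooth, projective, geometrically integral of dimension $m-1$, and $X(\FF_q) \subseteq H$; indeed $(H \cap X)(\FF_q) = H(\FF_q) \cap X(\FF_q)$, so in that case $X(\FF_q) \subseteq Y(\FF_q) \subseteq X(\FF_q)$ gives $Y(\FF_q) = X(\FF_q)$, and the inductive hypothesis applied to $Y$ produces the desired curve after finitely many further steps.

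To construct $H$, list $X(\FF_q) = \{P_1, \dots, P_N\}$. Since $X$ is smooth of dimension $m$ at each $P_j$, pick an $\ell_j \in \mathfrak{m}_{P_j} \subset \cO_{X,P_j}$ (a local coordinate, say) whose class in $\mathfrak{m}_{P_j}/\mathfrak{m}_{P_j}^2$ is nonzero. Now apply Poonen's Bertini theorem over $\FF_q$ in the form allowing prescribed Taylor data at a finite set of closed points \cite[Theorem 1.2]{Poo04}: as $d \to \infty$, the density among homogeneous $f$ of degree $d$ of those whose restriction $f|_X$ agrees with $\ell_j$ modulo $\mathfrak{m}_{P_j}^2$ for every $j$ and for which $H_f \cap X$ is smooth of dimension $m-1$ away from $\{P_1, \dots, P_N\}$ equals a strictly positive real number, namely a product of local densities at the $P_j$ with a global zeta-type factor. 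Hence for $d$ large such an $f$ exists; set $H = H_f$. Then $P_j \in H$ for every $j$, so $X(\FF_q) \subseteq H$. At each $P_j$ the local ring of $Y$ is $\cO_{X,P_j}/(f)$ and $f$ has the same nonzero image as $\ell_j$ in $\mathfrak{m}_{P_j}/\mathfrak{m}_{P_j}^2$; killing such an element in a regular local ring leaves a regular local ring of dimension one less, so $Y$ is smooth of dimension $m-1$ everywhere.

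For the geometric integrality of $Y$, enlarge $d$ if needed so that Serre vanishing gives $H^1(X, \cO_X(-H)) = 0$; by Serre duality this group is dual to $H^{m-1}(X, \omega_X \otimes \cO_X(H))$, which vanishes for $\deg H \gg 0$ since $m - 1 \ge 1$. The exact sequence $0 \to \cO_X(-H) \to \cO_X \to \cO_Y \to 0$ then yields $H^0(Y, \cO_Y) = H^0(X, \cO_X) = \FF_q$ (the last equality because $X$ is geometrically integral), so $Y$ is geometrically connected; being smooth it is geometrically regular, hence geometrically integral. This closes the induction.

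The crux is the existence of the slicing hypersurface. Over an infinite field one would just take a generic hyperplane, but over $\FF_q$ the rational points are far too plentiful for any hypersurface of bounded degree to pass through all of them while meeting $X$ transversally; one must let $\deg H \to \infty$ and estimate the probability that a random high-degree $f$ both acquires the prescribed first-order behaviour at each of the finitely many $\FF_q$-points and cuts out a smooth section elsewhere. The positivity of that density is precisely Poonen's theorem and is the only nonroutine input; the point count, the regularity computation in the local rings, and the cohomological connectedness statement are all standard.
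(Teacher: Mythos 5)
The paper does not prove this statement; it is quoted directly from \cite[Corollary 3.5]{Poo04}, so the only fair comparison is with Poonen's own argument, which your proposal reconstructs faithfully: induction on $\dim X$, slicing by a high-degree hypersurface produced by the Bertini theorem with prescribed first-order Taylor data at the finitely many points of $X(\FF_q)$, plus the standard Serre-vanishing argument for geometric connectedness. The proposal is correct and takes essentially the same route as the cited source.
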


In other words, the curve $C$ passes through all $\FF_q$-points in $X$, no matter how large the dimension of $X$ is! 

Here we explain a few scheme theoretic terminologies in the statement of Theorem \ref{thm:Poonen}. For a scheme $X$ defined over $\FF_q$, $X_{\overline{\FF}_q}$ is the scheme defined by the same set of polynomials, but understood as polynomials in $\overline{\FF}_q[x_0, \cdots, x_n]$. A scheme $X$ over $\FF_q$ is \emph{geometrically integral} if $X_{\overline{\FF}_q}$ is integral, that is, irreducible and reduced. A scheme $X \subset \PP^n$ of dimension $d$ defined over $\FF_q$ is \emph{smooth} if for every point $x \in X$, the Jacobian matrix obtained by local equations is of rank $n-d$. For the details, see \cite[Definition 3.5.12]{Poo17}.

\begin{remark}\label{rmk:completeintersection}
Based on his approach, we can show that such $C$ can be constructed as a complete intersection in $X$ -- if $X \subset \PP^n$ and $\dim X = m$, then one can find $m - 1$ homogeneous polynomials $f_1, \cdots, f_{m-1} \in \FF_q[x_0, \cdots, x_n]$ such that $C = V(f_1, \cdots f_{m-1}) \cap X$. 
\end{remark}

Poonen's approach does not provide any explicit example. Here we focus on the construction of concrete examples of space filling curves. Before going further, here we fix our terminology. 

\begin{definition}\label{def:spacefillingcurve}
Fix a base field $\FF_q$. We say a curve $C \subset \PP^n$ is a \emph{space filling curve for $\PP^n$} if $C$ is a projective geometrically irreducible curve such that $C(\FF_q) = \PP^n(\FF_q)$. Additionally, if $C$ is smooth, we say $C$ is a \emph{smooth space filling curve}. 
\end{definition}

\begin{lemma}\label{lem:idealofpoints}
Let $\PP^n = \mathrm{Proj}\;\FF_q[x_0, \cdots, x_n]$, in other words, let $(x_0 : \cdots : x_n)$ be the homogeneous coordinates of $\PP^n$. The ideal of the set of all $\FF_q$-rational points $\PP^n(\FF_q)$ is 
\begin{equation}\label{eqn:idealJ}
    J = (x_i^q x_j - x_i x_j^q)_{0 \le i \ne j \le n}.
\end{equation}
\end{lemma}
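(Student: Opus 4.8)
The plan is to prove the two inclusions $J\subseteq I(\PP^n(\FF_q))$ and $I(\PP^n(\FF_q))\subseteq J$ separately. The first is immediate from Fermat's little theorem: if $\bp=(p_0:\cdots:p_n)\in\PP^n(\FF_q)$ then every $p_i$ lies in $\FF_q$, so $p_i^q=p_i$ and hence $p_i^qp_j-p_ip_j^q=p_ip_j-p_ip_j=0$; thus every generator of $J$ vanishes at every $\FF_q$-point, so $J\subseteq I(\PP^n(\FF_q))$.

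For the reverse inclusion I would first pass to an affine chart. Recall the classical fact that the vanishing ideal of $\AA^n(\FF_q)=\FF_q^n$ in $\FF_q[y_1,\dots,y_n]$ is $\mathfrak a=(y_1^q-y_1,\dots,y_n^q-y_n)$; this follows by comparing $\FF_q$-dimensions, since the $q^n$ monomials $y_1^{b_1}\cdots y_n^{b_n}$ with $0\le b_i\le q-1$ span $\FF_q[y]/\mathfrak a$, while $\FF_q[y]/I(\FF_q^n)$ injects into the $q^n$-dimensional space of functions $\FF_q^n\to\FF_q$ and $\mathfrak a\subseteq I(\FF_q^n)$. Now let $f$ be homogeneous of degree $d$ vanishing on $\PP^n(\FF_q)$. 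Dehomogenizing on the chart $x_0\ne 0$, the polynomial $f(1,y_1,\dots,y_n)$ vanishes on $\FF_q^n$, hence lies in $\mathfrak a$; since $y_i^q-y_i$ is exactly the dehomogenization of the generator $x_0^qx_i-x_0x_i^q$ of $J$, clearing denominators yields $x_0^Nf\in J$ for some $N$. Repeating the argument on each chart $x_i\ne 0$ gives $x_i^Nf\in J$ for all $i$, so $f$ lies in the saturation $J^{\mathrm{sat}}$. Combined with the first paragraph and the fact that $I(\PP^n(\FF_q))$ is already saturated, this proves $J^{\mathrm{sat}}=I(\PP^n(\FF_q))$.

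It remains to show that $J$ is itself saturated, which I expect to be the main obstacle: the chart argument only controls $J$ up to saturation, so one must rule out an embedded component of $V(J)$ at the cone point, equivalently show that $x_0$ (or $x_n$) is a nonzerodivisor on $\FF_q[x_0,\dots,x_n]/J$. I would handle this by producing a Gröbner basis. With respect to the lexicographic order with $x_0>\cdots>x_n$, I claim the listed binomials $x_i^qx_j-x_ix_j^q$ form a Gröbner basis of $J$, with leading terms $x_i^qx_j$ for $i<j$; the verification reduces to Buchberger's criterion, i.e. to checking that the finitely many $S$-polynomials reduce to zero modulo the generators, and this combinatorial check is the technical heart of the proof. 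Given the Gröbner basis, the standard monomials — those divisible by no $x_i^qx_j$ with $i<j$ — form an $\FF_q$-basis of each graded piece of $\FF_q[x_0,\dots,x_n]/J$, and a direct count shows that the resulting Hilbert function of $\FF_q[x_0,\dots,x_n]/J$ equals $\binom{n+d}{d}$ for $d\le q$ and stabilizes to $(q^{n+1}-1)/(q-1)=|\PP^n(\FF_q)|$, agreeing in every degree with the Hilbert function of the finite reduced scheme $\PP^n(\FF_q)$. Since $J\subseteq I(\PP^n(\FF_q))$, equality of Hilbert functions in all degrees forces $J=I(\PP^n(\FF_q))$, and in particular $J=J^{\mathrm{sat}}$; the same normal-form analysis also exhibits the quotient as reduced, recovering the expected fact that $J$ is radical, being the ideal of a finite reduced set of points.
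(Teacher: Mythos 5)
Your strategy is genuinely different from the paper's. The paper argues scheme-theoretically: it checks that $V(J)$ and $\PP^n(\FF_q)$ coincide as sets, and then, by localizing on an affine chart so that each $x_i^q-x_i=\prod_{a\in\FF_q}(x_i-a)$ becomes a unit times $x_i-p_i$, that $V(J)$ is reduced at every one of its points. Strictly speaking that pins down $J$ only up to saturation, a subtlety you correctly isolate; your plan, if completed, would prove the literal ideal-theoretic statement. The first half of your argument is correct and complete: Fermat gives $J\subseteq I(\PP^n(\FF_q))$, and the dehomogenization on each chart, using the known vanishing ideal of $\AA^n(\FF_q)$, gives $I(\PP^n(\FF_q))\subseteq J^{\mathrm{sat}}$.

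The problem is that the two steps needed to finish are both asserted rather than proved, and each is a genuine theorem. First, you claim that the binomials $x_i^qx_j-x_ix_j^q$ form a lexicographic Gr\"obner basis but explicitly defer the Buchberger verification, calling it ``the technical heart of the proof''; a proof cannot outsource its own technical heart. (The claim is in fact true, but the $S$-pair reductions are not a formality.) Second, and more seriously, your final comparison requires knowing the Hilbert function of the reduced point set in \emph{every} degree, not merely its eventual value: writing $S=\FF_q[x_0,\dots,x_n]$ and $I=I(\PP^n(\FF_q))$, the inclusion $J\subseteq I$ only gives $h_{S/J}(d)\ge h_{S/I}(d)$, and agreement for $d\le q$ and for $d\gg 0$ does not exclude a strict inequality in the intermediate degrees. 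Computing $h_{S/I}(d)$ for all $d$ is precisely the dimension formula for projective Reed--Muller codes (S{\o}rensen), which is comparable in difficulty to the statement being proved, so as written this step is circular in spirit. You would need either to supply that computation, or to prove directly that $J$ is saturated; note that the most naive route through the initial ideal fails, since every coordinate $x_k$ is a zerodivisor on $S/\mathrm{in}(J)$ (for $k<n$ take the standard monomial $x_k^{q-1}x_n$, for $k=n$ take $x_0^q$).
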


\begin{proof}
Since every $a \in \FF_q$ satisfies $a^q = a$, it is routine to check that any generator of $J$ vanishes at $\PP^n(\FF_q)$. So we have $\PP^n(\FF_q) \subset V(J)$. 

Conversely, pick a point $\bp = (p_0 : \cdots : p_n) \in V(J)$. We may assume that $p_0 = 1$. Take a standard affine open neighborhood $x_0 \ne 0$. Because $\bp$ is a point in $V(J) \cap \AA^n(\FF_q)$, it is zero for all dehomogenizations of $x_i^q x_0 - x_i x_0^q$, which is $x_i^q - x_i$. In particular, its $i$-th coordinate is in $\FF_q$ for all $1 \le i \le n$. Hence $\bp \in \PP^n(\FF_q)$. Therefore, set theoretically, $V(J) = \PP^n(\FF_q)$.

It remains to show that they have the same scheme structures. Because $\PP^n(\FF_q)$ is a finite set of reduced points, it is sufficient to show that $V(J)$ is also reduced at each point. Essentially, it follows from the fact that $x_i^q - x_i = \prod_{a \in \FF_q}(x_i - a)$ and the right hand side has no multiple factor. Take $\bp = (p_0 : \cdots : p_n) \in V(J)$ with $p_0 = 1$ as before. On the affine chart $V(J) \cap \AA^n(\FF_q)$ given by $p_0 = 1$, take a localization along the union of hyperplanes of type $x_j - a$ which do not pass $\bp$. Then each defining equation $x_i^q - x_i$ is a unit times $x_i - p_i$. Therefore, after the localization, the ideal of $V(J) \cap \AA^n(\FF_q)$ is $(x_1 - p_1, \cdots, x_n - p_n)$, hence reduced at $\bp$.
\end{proof}

\begin{corollary}\label{cor:equations}
Suppose that $C \subset \PP^n$ is a space filling curve over $\FF_q$. Then any defining equation of $C$ must be in the ideal $J$ in \eqref{eqn:idealJ}. In particular, every defining equation of $C$ is of degree at least $q+1$.
\end{corollary}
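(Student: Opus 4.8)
The plan is to deduce both claims directly from Lemma~\ref{lem:idealofpoints}. Suppose $C \subset \PP^n$ is a space filling curve over $\FF_q$, so that $C(\FF_q) = \PP^n(\FF_q)$. Let $f \in I(C)$ be any defining equation of $C$, and write $Z = V(f) \supset C$ for the hypersurface it cuts out. First I would observe that $\PP^n(\FF_q) = C(\FF_q) \subset Z(\FF_q)$; that is, $f$ vanishes at every $\FF_q$-rational point of $\PP^n$. Since $Z$ is a hypersurface containing the reduced zero-dimensional scheme $\PP^n(\FF_q) = V(J)$ (by Lemma~\ref{lem:idealofpoints}), and since $V(J)$ is reduced, I would conclude that $f$ lies in the (radical) ideal $J$: concretely, $f$ vanishes on the set $\PP^n(\FF_q)$, and because $V(J) = \PP^n(\FF_q)$ as \emph{reduced} schemes, the ideal $J$ equals the full saturated ideal of functions vanishing on that point set, so $f \in J$.

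Next, to get the degree bound, I would note that every generator $x_i^q x_j - x_i x_j^q$ of $J$ in \eqref{eqn:idealJ} is homogeneous of degree $q+1$. Since $J$ is generated in degree $q+1$, every nonzero homogeneous element of $J$ has degree at least $q+1$: writing $f = \sum_{i \ne j} g_{ij}(x_i^q x_j - x_i x_j^q)$ with $g_{ij}$ homogeneous, the degree of $f$ is $\deg g_{ij} + (q+1) \ge q+1$ for any term that actually appears. As $f$ is a nonzero defining equation (a defining equation is by definition not the zero polynomial), $\deg f \ge q+1$, which is the claimed bound.

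The only subtlety — and the point I would be most careful about — is the passage ``$f$ vanishes on $\PP^n(\FF_q)$, therefore $f \in J$.'' This is exactly where one needs the \emph{scheme-theoretic} content of Lemma~\ref{lem:idealofpoints}, not merely the set-theoretic equality $V(J) = \PP^n(\FF_q)$: a priori $f$ could vanish on the point set while lying only in the radical of some larger ideal. But the lemma asserts that $V(J)$ is reduced, hence $J$ is a radical ideal, and a radical ideal cutting out a finite set of reduced points is precisely the ideal of all polynomials vanishing on that set. Since $f$ is such a polynomial, $f \in J$. (Equivalently: $J = \bigcap_{\bp \in \PP^n(\FF_q)} I(\bp)$, and $f$ lies in each $I(\bp)$.) I would state this half-sentence of justification explicitly so the reader sees that Corollary~\ref{cor:equations} genuinely uses the reducedness established in the proof of Lemma~\ref{lem:idealofpoints}, and is not just a restatement of the trivial set-theoretic inclusion.
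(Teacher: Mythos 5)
Your proposal is correct and follows essentially the same route as the paper: both deduce $I(C) \subset I(\PP^n(\FF_q)) = J$ from $C(\FF_q) = \PP^n(\FF_q)$ together with Lemma~\ref{lem:idealofpoints}, and then read off the degree bound from the fact that $J$ is generated in degree $q+1$. The extra half-sentence you add---that one needs $J$ to be the full (radical, saturated) ideal of the point set, not merely an ideal with the right zero locus---is exactly the scheme-theoretic content the paper packages into the phrase ``$\PP^n(\FF_q) \subset C$ as a scheme,'' so it is a fair elaboration rather than a different argument.
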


\begin{proof}
If there is a space filling curve $C$, then from $C(\FF_q) = \PP^n(\FF_q)$, $\PP^n(\FF_q) \subset C$ as a scheme. This implies $I(C) \subset I(\PP^n(\FF_q)) = J$.    

The last assertion follows from the fact that every generator of $J$ has degree $q+1$.
\end{proof}

Moreover, the degree of $C$ has to be large as well. 

\begin{proposition}\label{prop:degreeofspacefillingcurve}
Let $C \subset \PP^n$ be a smooth space filling curve over $\FF_q$. Then $\deg C \ge (q^n-1)/(q-1)+1$. 
\end{proposition}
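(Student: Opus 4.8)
The plan is to bound $\deg C$ from below by intersecting $C$ with a hyperplane of $\PP^n$, defined over $\FF_q$, that is chosen tangent to $C$ at one of its $\FF_q$-rational points. We may assume $n \ge 2$, as for $n = 1$ the only space filling curve is $C = \PP^1$ itself. The numerical point behind the statement is that $(q^n - 1)/(q-1) = |\PP^{n-1}(\FF_q)|$ is precisely the number of $\FF_q$-rational points on any hyperplane $H \subset \PP^n$.

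First I would pick a point $x \in C(\FF_q)$, which exists because $C(\FF_q) = \PP^n(\FF_q) \ne \emptyset$. Since $C$ is smooth at $x$, the line $L \subset \PP^n$ tangent to $C$ at $x$ is well defined, and it is defined over $\FF_q$ because $C$ and $x$ are. The space of linear forms vanishing on $L$ has $\FF_q$-dimension $(n+1) - 2 = n - 1 \ge 1$, so there is a hyperplane $H \subset \PP^n$, defined over $\FF_q$, containing $L$.

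Next I would observe that $C \not\subseteq H$: otherwise $\PP^n(\FF_q) = C(\FF_q) \subseteq H(\FF_q)$, which is absurd since $|\PP^n(\FF_q)| > |H(\FF_q)|$. Therefore $C \cap H$ is a finite scheme, and as the scheme-theoretic hyperplane section of the curve $C$ it has degree $\deg(C \cap H) = \deg C \cdot \deg H = \deg C$ by B\'ezout, where for a finite $\FF_q$-scheme $Z$ one sets $\deg Z = \sum_{P \in Z} \operatorname{length}_{\cO_{Z,P}}(\cO_{Z,P}) \cdot [\kappa(P) : \FF_q]$. Each of the $|H(\FF_q)|$ points of $H(\FF_q)$ lies on $C$ (as $H(\FF_q) \subseteq \PP^n(\FF_q) = C(\FF_q)$) and therefore on $C \cap H$, contributing at least $1$ to this sum. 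Moreover at $x$ the local ring $\cO_{C,x}$ is a discrete valuation ring, and $H$ is cut out near $x$ by a linear form whose differential vanishes along $L$; hence this linear form restricts to an element of the square of the maximal ideal of $\cO_{C,x}$, and $\operatorname{length}_{\cO_{C\cap H, x}}(\cO_{C \cap H, x}) \ge 2$.

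Combining these observations, $\deg C = \deg(C \cap H) \ge 2 + (|H(\FF_q)| - 1) = |H(\FF_q)| + 1 = (q^n - 1)/(q-1) + 1$, which is the desired inequality. I do not expect a genuine obstacle in this argument; the only points requiring care are organizational, namely that the tangent line $L$ and a hyperplane containing it can be chosen over $\FF_q$, and that the local lengths of the scheme-theoretic intersection $C \cap H$ really do sum to $\deg C$, with the surplus $+1$ accounted for by the tangency at $x$.
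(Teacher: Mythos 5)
Your proposal is correct and follows essentially the same route as the paper's proof: pick an $\FF_q$-point, take an $\FF_q$-hyperplane containing the tangent line there, and note that the hyperplane section has degree $\deg C$ but picks up all $|H(\FF_q)|$ rational points of $H$ plus an extra unit of multiplicity from the tangency. Your write-up is somewhat more careful than the paper's (you explicitly verify $C \not\subseteq H$ and spell out the length computation at the tangent point), but there is no substantive difference.
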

\begin{proof}
Pick an $\FF_q$-point $\bp \in C(\FF_q)$. Since $C$ is defined by polynomials with $\FF_q$-coefficients, its tangent line $\ell$ at $\bp$ is also defined over $\FF_q$, because its directional vector is in the kernel of an $\FF_q$-matrix generated by the gradient vectors of the defining equations of $C$. Hence we may find an $\FF_q$-hyperplane $H$ that contains $\ell$. By definition, $\deg C$ is the number of intersection points in $H \cap C$ counted with multiplicity. Since $C$ is a space filling curve, $(H \cap C)(\FF_q) = H(\FF_q)$. Moreover, $H$ and $C$ are tangent to each other at $\bp$, hence the multiplicity is at least two. Therefore, 
\[
    \deg C \ge |(H \cap C)(\FF_q)| + 1 = |H(\FF_q)| + 1 = \frac{q^n - 1}{q-1}+1.
\]
\end{proof}

For $\PP^3$, the inequality is specialized to $\deg C \ge q^2 + q + 2$.

\subsection{Plane filling curves}\label{ssec:plainfilingcurve}

Perhaps the simplest non-trivial case is that $X = \PP^2$. Homma and Kim showed that a classical example of Tallini \cite{Tal61} is indeed an example of a smooth plane filling curve of minimal degree by showing it is smooth \cite{HK13}.

Let $\PP^2$ be the projective plane over $\FF_q$. We set $\PP^2 = \mathrm{Proj} \; \FF_q[x, y, z]$. Homma and Kim showed that any curve of the form
\begin{equation}
	f_A := (x, y, z)A(y^qz-yz^q, z^qx-zx^q, x^qy-xy^q)^t
\end{equation}
for $A \in \mathrm{GL}_3(\FF_q)$, defines a smooth space filling curve $V(f_A)$, if and only if $A$ has the characteristic polynomial that is irreducible over $\FF_q$ \cite[Theorem 3.2]{HK13}. 

From the definition of $f_A$, it is clear that $f_A$ is in the ideal generated by $\{y^qz-yz^q, z^qx-zx^q, x^qy-xy^q\}$. In particular, by Lemma \ref{lem:idealofpoints}, $V(f_A)$ is a space filling curve. 

For our curve to be smooth, we need each gradient vector to be non-zero for every point in $V(f_A)$. Since we need to consider all points over any extension field of $\FF_q$, this is already a non-trivial task. Homma and Kim performed an explicit calculation of the partial derivatives and showed that there is no singular point on $V(f_A)$. Finally, because $V(f_A)$ is connected, the smoothness implies the irreducibility \cite[II.8.Ex.4.]{Har77}.

\section{Space filling surfaces}\label{sec:spacefillingsurface}

In this section, we describe the properties of a space filling surface and how its construction relate to finding a smooth space filling curve. 

Let $\FF_q$ be a finite field of order $q = p^r$. Let $\PP^3 = \mathrm{Proj}\; \FF_q[x, y, z, w]$. 

\subsection{Basic strategy}\label{ssec:strategy}

We seek our curve $C$ that is a complete intersection in $\PP^3$ (The existence follows from the proof of Theorem \ref{thm:Poonen}). Thus, $C = V(f_1, f_2) \subset \PP^3$. Moreover, since $V(f_1, f_2) = V(f_1) \cap V(f_2)$, each $V(f_i) \subset \PP^3$ must also satisfy $V(f_i)(\FF_q) = \PP^3(\FF_q)$. Therefore, we need to find two \emph{space filling surfaces} such that $V(f_1) \cap V(f_2)$ is a smooth space filling curve. 

\subsection{First example}\label{ssec:firstexample}

\begin{definition}\label{def:varphipoly} 
Let 
\[
    \varphi_{xy} := x^q y - x y^q.
\]
Similarly we can define $\varphi_{xz}, \varphi_{xw}, \cdots$. Note that $\varphi_{yx} = -\varphi_{xy}$.
\end{definition}

In this case, Lemma \ref{lem:idealofpoints} is interpreted as the following: The ideal $J$ in \eqref{eqn:idealJ} is 
\[
    J = (\varphi_{xy}, \varphi_{xz}, \varphi_{xw}, \varphi_{yz}, \varphi_{yw}, \varphi_{zw}).
\]

\begin{lemma}\label{lem:spacefillingsurfacep+1}
Let $g := \varphi_{xy} + \varphi_{zw}$. Then $V(g)$ is a space filling surface of minimal degree $q+1$. 
\end{lemma}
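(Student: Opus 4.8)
The plan is to show two things: (i) that $V(g)$ is a space filling surface, i.e.\ $V(g)(\FF_q) = \PP^3(\FF_q)$, and (ii) that $q+1$ is the minimal possible degree of such a surface. For (i), I would simply observe that $g = \varphi_{xy} + \varphi_{zw}$ lies in the ideal $J = (\varphi_{xy}, \varphi_{xz}, \varphi_{xw}, \varphi_{yz}, \varphi_{yw}, \varphi_{zw})$, being a sum of two generators. By Lemma \ref{lem:idealofpoints}, $V(J) = \PP^3(\FF_q)$ set-theoretically, so $\PP^3(\FF_q) \subset V(g)$; conversely $V(g)$ is a surface in $\PP^3$, hence $V(g)(\FF_q) \subset \PP^3(\FF_q)$ trivially. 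Thus $V(g)(\FF_q) = \PP^3(\FF_q)$, and $V(g)$ is a space filling surface. This direction is immediate once the containment $g \in J$ is noted; the only thing worth a sentence is that $g$ is not identically zero and that $V(g)$ is genuinely a hypersurface (dimension $2$), which is clear since $g$ is a nonzero homogeneous polynomial.

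For the degree statement, first note $\deg g = q+1$, since each $\varphi$ monomial has degree $q+1$. So it remains to argue that no space filling surface of degree $\le q$ exists. I would deduce this from Corollary \ref{cor:equations}: any defining equation of a space filling subscheme — in particular, the single equation cutting out a space filling surface $S = V(h) \subset \PP^3$ — must lie in $J$, and every element of $J$ has degree at least $q+1$ because $J$ is generated in degree $q+1$ and contains no lower-degree elements. Hence $\deg h \ge q+1$, so $q+1$ is minimal and $V(g)$ achieves it.

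The one point requiring slight care is the claim that $J$ contains no nonzero homogeneous element of degree $\le q$: this is already implicitly used in Corollary \ref{cor:equations} (whose last assertion is exactly that every defining equation has degree at least $q+1$), so I would simply invoke that corollary rather than re-prove it. I do not anticipate any serious obstacle here — the lemma is essentially a packaging of Lemma \ref{lem:idealofpoints} and Corollary \ref{cor:equations} — so the proof should be only a few lines. If one wanted to be fully self-contained about minimality, the alternative is to note that $\PP^3(\FF_q) \subset V(h)$ forces $h$ to vanish on a zero-dimensional scheme of length $|\PP^3(\FF_q)| = q^3+q^2+q+1$, and a degree-$d$ hypersurface restricted to a line $\ell \cong \PP^1$ defined over $\FF_q$ vanishes at all $q+1$ points of $\ell(\FF_q)$, forcing $d \ge q+1$ unless $\ell \subset V(h)$ — but since this must hold for every $\FF_q$-line, and the lines cover $\PP^3$, we would get $h \equiv 0$; either way $d \ge q+1$.
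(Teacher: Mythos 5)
There is a genuine gap: you never address irreducibility (or smoothness), and that is the actual content of this lemma. By analogy with Definition \ref{def:spacefillingcurve}, a space filling surface is required to be geometrically irreducible, not merely a hypersurface whose $\FF_q$-points exhaust $\PP^3(\FF_q)$; without that requirement the statement would be nearly vacuous, since $g \in J$ is visible from the definition. A priori $V(g)$ could be a union of lower-degree components (e.g.\ a product of linear forms also lies in $J$ in low degrees over small fields, so reducibility is a real possibility that must be ruled out). The paper's proof does exactly this: in characteristic $p$ one has $\partial(x^qy)/\partial x = qx^{q-1}y = 0$, so $\nabla g = (-y^q, x^q, -w^q, z^q)$, which vanishes at no point of $\PP^3$ over any extension field; hence $V(g)$ is smooth. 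Since a hypersurface in $\PP^3$ is connected \cite[II.8.Ex.4.]{Har77}, smoothness then forces geometric irreducibility. Your write-up contains none of this, so it proves only the set-theoretic statement $V(g)(\FF_q) = \PP^3(\FF_q)$ together with degree minimality.

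Two smaller points. First, your main route to minimality (every defining equation lies in $J$, and $J$ lives in degrees $\ge q+1$) is fine and matches the paper. Second, your ``fully self-contained'' alternative at the end is flawed as stated: if $\deg h \le q$ then indeed every $\FF_q$-line must be contained in $V(h)$, but the union of the finitely many $\FF_q$-lines is a one-dimensional set and does not cover $\PP^3$, so you cannot conclude $h \equiv 0$ from that containment alone; the argument as written is circular (it reduces to the very claim that no nonzero form of degree $\le q$ vanishes on all of $\PP^3(\FF_q)$). Stick with the restriction-to-a-line argument applied to a single line to get $d \ge q+1$ directly, or simply cite the degree bound from Corollary \ref{cor:equations}.
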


\begin{proof}
Note that $\nabla g = (-y^q, x^q, -w^q, z^q)$. Thus, this vector is nonzero at every point (not only $\FF_q$-rational points!). Therefore, $V(g)$ is a smooth variety. Because $V(g)$ is connected by \cite[II.8.Ex.4.]{Har77}, the smoothness implies the irreducibility. Since the minimal degree of elements in $J$ is $q+1$, $V(g)$ is a space filling surface with minimal degree.
\end{proof}

\begin{remark}\label{rmk:odddimensionalspace}
The same proof shows that for any odd dimensional projective space $\PP^n$, there is a smooth space filling hypersurface of degree $q+1$. Indeed, one can take 
\[
    g = \sum_{0 \le i \le n, 2|i} \varphi_{x_i x_{i+1}},
\]
and the same proof shows that $V(g) \subset \PP^n$ is a smooth space filling hypersurface. 

The same construction does not work for an even dimensional projective space. Indeed, Tallini showed that any irreducible degree $q+1$ space filling curve in $\PP^2$ has a unique singular point.
\end{remark}

\subsection{Higher degree examples}\label{ssec:higherdegree}

Now we need to find another space filling surface to find a space filling curve. One may ask if we can find another surface of degree $q+1$ and take the intersection of them. Indeed, this is not possible. 

In \cite{Hom12}, improving the Serre-Weil bound 
\[
    |C(\FF_q)| \le q + 1 + 2g\sqrt{q}, 
\]
(here $g$ is the genus of $C$), Homma showed the following bound of the number of $\FF_q$-rational points on $C$. 
\begin{theorem}[\protect{\cite[Theorem 3.2]{Hom12}}]\label{thm:Homma}
Let $C$ be a non-degenerate irreducible curve of degree $d$ in $\PP^3$ over $\FF_q$. Then 
\begin{equation}
    |C(\FF_q)| \le \frac{(q-1)|\PP^3(\FF_q)|}{q^3+q^2+q-3}d.
\end{equation}
\end{theorem}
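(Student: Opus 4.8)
The plan is to bound $N:=|C(\FF_q)|$ by a double count over the $\FF_q$-rational planes of $\PP^3$, measuring for each plane how far the plane section of $C$ falls short of being $d$ distinct $\FF_q$-points.

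First I would record the elementary counting identities. There are $|\PP^3(\FF_q)| = q^3+q^2+q+1$ planes defined over $\FF_q$ (projective duality), and each $\FF_q$-point of $\PP^3$ lies on $|\PP^2(\FF_q)| = q^2+q+1$ of them, so $\sum_{H}|C(\FF_q)\cap H| = N(q^2+q+1)$, the sum running over all $\FF_q$-planes $H$. Since $C$ is irreducible and non-degenerate, no plane contains $C$, hence $C\cap H$ is a finite scheme of length exactly $d$ for every $H$; comparing this length with the number of rational points in the section gives
\[
  d-|C(\FF_q)\cap H| \ \geq\ \sum_{P\in C(\FF_q)\cap H}\bigl(I_P(C,H)-1\bigr),
\]
where $I_P(C,H)$ is the local intersection multiplicity. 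Summing over $H$ and switching the order of summation reduces the whole estimate to a purely local claim: for every $P\in C(\FF_q)$,
\[
  \sum_{H\ni P}\bigl(I_P(C,H)-1\bigr)\ \geq\ q+2.
\]

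To prove this local inequality I would write its left-hand side as $\sum_{k\geq 2}\#\{H\ni P: I_P(C,H)\geq k\}$ and bound the first two terms. If $P$ is a smooth point of $C$, then $I_P(C,H)\geq 2$ precisely when $H$ contains the tangent line $T_P$, and there are $q+1$ such planes; moreover at least one plane through $T_P$ — the osculating plane, cut out by the $2$-jet of $C$ at $P$ — has $I_P(C,H)\geq 3$, and it is $\FF_q$-rational because that jet is defined over $\FF_q$. Thus $\#\{H: I_P\geq 2\}\geq q+1$ and $\#\{H: I_P\geq 3\}\geq 1$, giving the bound $q+2$. If $P$ is a singular point, then $I_P(C,H)\geq\operatorname{mult}_P(C)\geq 2$ for \emph{all} $q^2+q+1$ planes through $P$, and the left-hand side is at least $q^2+q+1\geq q+2$. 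Assembling the pieces,
\[
  d\,|\PP^3(\FF_q)| - N\,|\PP^2(\FF_q)| \ =\ \sum_H\bigl(d-|C(\FF_q)\cap H|\bigr)\ \geq\ (q+2)N,
\]
and since $|\PP^2(\FF_q)|+(q+2) = q^2+2q+3 = (q^3+q^2+q-3)/(q-1)$, solving for $N$ gives exactly $N\leq \frac{(q-1)\,|\PP^3(\FF_q)|}{q^3+q^2+q-3}\,d$.

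The step I expect to demand the most care is the local inequality at smooth points in positive characteristic: one must verify that the osculating plane is genuinely defined and $\FF_q$-rational, and that inflectionary or characteristic-$p$ degeneracies of the jet of $C$ at $P$ do not break the count. In fact all such degeneracies only enlarge $\sum_{H\ni P}(I_P(C,H)-1)$ — for instance, if the $2$-jet of $C$ at $P$ spans only $T_P$, then every one of the $q+1$ planes through $T_P$ already has $I_P\geq 3$ — so the extremal case is a point in general position where equality $q+2$ holds; nevertheless the intersection-multiplicity bookkeeping has to be done with Hasse derivatives rather than ordinary derivatives.
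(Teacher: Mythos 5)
The paper does not prove this statement at all---it is quoted verbatim from Homma \cite{Hom12} and used as a black box---so there is no internal proof to compare against. Your argument is correct and complete, and it is in substance the argument of \cite{Hom12}: double-count incidences between $C(\FF_q)$ and the $q^3+q^2+q+1$ rational planes, use $\sum_{P\in C\cap H} I_P(C,H)=d$ for every plane (valid since non-degeneracy forbids $C\subset H$), and reduce to the local estimate $\sum_{H\ni P}(I_P(C,H)-1)\ge q+2$. Your treatment of the local estimate is the part that genuinely needs care, and you handle it properly: at a rational singular point $\cO_{C,P}$ is a one-dimensional local domain, hence Cohen--Macaulay, so $I_P(C,H)=\mathrm{length}(\cO_{C,P}/(h))\ge \mathrm{mult}_P(C)\ge 2$ for all $q^2+q+1$ planes; at a rational smooth point the pencil of $q+1$ rational planes through the rational tangent line all meet $C$ with multiplicity $\ge 2$, and writing the two linear forms cutting out $T_P$ as $\ell_i(t)=c_it^2+O(t^3)$ in a local parameter over $\FF_q$ shows that either some rational member of the pencil has $I_P\ge 3$ (when $(c_1,c_2)\ne(0,0)$) or every member does, so the sum is at least $q+2$ in all cases, including the characteristic-$p$ degeneracies you flag. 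The closing arithmetic $(q-1)(q^2+2q+3)=q^3+q^2+q-3$ checks out, so your bound coincides exactly with the stated one.
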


\begin{corollary}\label{cor:minimumdegree}
The degree of a smooth complete intersection space filling curve in $\PP^3$ is at least $(q+1)(q+2)$.
\end{corollary}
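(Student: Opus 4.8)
The plan is to combine the degree bound on individual defining equations (Corollary \ref{cor:equations}) with Homma's point‑count inequality (Theorem \ref{thm:Homma}); together these leave $(q+1)^2$ as the only degree strictly below $(q+1)(q+2)$ that is not immediately excluded, and Homma's bound disposes of that case.

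Write the curve as a complete intersection $C = V(f_1,f_2) \subset \PP^3$ (Remark \ref{rmk:completeintersection}), so that $\deg C = (\deg f_1)(\deg f_2)$ by B\'ezout. Since $C$ is space filling, $C(\FF_q) = \PP^3(\FF_q)$, so $f_1$ and $f_2$ are defining equations of $C$, and Corollary \ref{cor:equations} gives $\deg f_i \ge q+1$ for $i=1,2$. If $\deg f_i \ge q+2$ for some $i$, then $\deg C \ge (q+1)(q+2)$ and we are done; so it remains only to exclude $\deg f_1 = \deg f_2 = q+1$, i.e.\ $\deg C = (q+1)^2$.

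Suppose $\deg C = (q+1)^2$. The curve $C$ is non-degenerate: every hyperplane $H \subset \PP^3$ satisfies $|H(\FF_q)| = q^2+q+1 < q^3+q^2+q+1 = |\PP^3(\FF_q)| = |C(\FF_q)|$, so $C$ lies in no hyperplane; and $C$ is geometrically irreducible by Definition \ref{def:spacefillingcurve} (smoothness is not even needed here). Applying Theorem \ref{thm:Homma} and using $|C(\FF_q)| = |\PP^3(\FF_q)|$, then dividing both sides by $|\PP^3(\FF_q)| > 0$, we obtain
\[
    \deg C \ge \frac{q^3+q^2+q-3}{q-1} = q^2+2q+3,
\]
the equality being the factorization $q^3+q^2+q-3 = (q-1)(q^2+2q+3)$. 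This contradicts $\deg C = (q+1)^2 = q^2+2q+1 < q^2+2q+3$. Hence $\deg C \ne (q+1)^2$, and therefore $\deg C \ge (q+1)(q+2)$.

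Once Theorem \ref{thm:Homma} is granted the argument is a one-line computation, so there is no serious obstacle; the only points requiring a little care are verifying that $C$ is non-degenerate (so that Homma's bound applies) and the elementary factorization above. Note that Proposition \ref{prop:degreeofspacefillingcurve} alone does not suffice, since $(q+1)^2 = q^2+2q+1 \ge q^2+q+2$ for all $q$, so Homma's sharper estimate is genuinely needed.
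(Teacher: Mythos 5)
Your proof is correct and follows essentially the same route as the paper: apply Corollary \ref{cor:equations} to get $\deg f_i \ge q+1$, then use Homma's bound (Theorem \ref{thm:Homma}) to rule out the case $\deg f_1 = \deg f_2 = q+1$ via the inequality $d \ge (q^3+q^2+q-3)/(q-1) = q^2+2q+3 > (q+1)^2$. Your explicit verification of non-degeneracy (which the paper asserts without proof) and the factorization check are welcome additions but do not change the argument.
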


\begin{proof}
Suppose that $C$ is a smooth complete intersection space filling curve in $\PP^3$. Any space filling curve is non-degenerate. By Theorem \ref{thm:Homma}, 
\begin{equation}\label{eqn:Hommabound}
    |\PP^3(\FF_q)| = |C(\FF_q)| \le \frac{(q-1)|\PP^3(\FF_q)|}{q^3+q^2+q-3}d.
\end{equation}
Since $C = V(f_1, f_2)$, $d = \deg f_1 \cdot \deg f_2$ by B\'ezout's theorem \cite[Proposition 8.4]{Ful98}. Moreover, $\deg f_i \ge q+1$ by Corollary \ref{cor:equations}. One can see that $\deg f_1 = \deg f_2 = q+1$ violates \eqref{eqn:Hommabound}.
\end{proof}

\begin{question}\label{que:minimumdegree}
Can we find a smooth space filling curve $C$ of degree $(q+1)(q+2)$?
\end{question}

Let $f \in \FF_q[x, y, z, w]$ be the homogeneous polynomial defining the second surface $V(f)$. We require that: 
\begin{enumerate}
    \item $f \in J$ by Corollary \ref{cor:equations};
    \item $\deg f \ge q+2$;
    \item $V(g, f) \subset \PP^3$ is smooth. 
\end{enumerate}
If we find such $f \in \FF_q[x, y, z, w]$, then $C := V(g, f)$ is a space filling curve of degree $(q+1)\deg f$. Here $g$ is the degree $q+1$ polynomial in Lemma \ref{lem:spacefillingsurfacep+1}.

The last assertion can be checked as the following. At a point $\bp \in C = V(g, f)$, $\bp$ is a singular point if and only if the gradient vectors $\nabla g(\bp)$ and $\nabla f(\bp)$ are linearly dependent. Thus, if we set the $2 \times 4$ matrix
\begin{equation}\label{eqn:Mmatrix}
    M(\bp) := \begin{bmatrix}\nabla g(\bp) \\ \nabla f(\bp)\end{bmatrix}
\end{equation}
and let $D_{ij}(\bp)$ be the determinant of the $2 \times 2$ minor of $M(\bp)$ obtained by taking $i$-th and $j$-th columns, then $C$ is smooth if and only if $V(g, f, D_{ij}) = \emptyset$, or equivalently, $(g, f, D_{ij}) = \FF_q[x,y,z,w]$.

\section{Numerical examples}\label{sec:numericalexamples}

By using random constructions with \texttt{Macaulay2}, we are able to identify multiple low degree space filling curves for $\FF_q$ with small $q$, and thus, to prove Theorem \ref{thm:mainthm}. The code used to randomly generate such curves is as follows:
\begin{verbatim}
needsPackage("SpaceCurves");
p = 3;
r = 1;
q = p^r;
K = GF(q, Variable => a);
d = 1;
S = K[x, y, z, w];
g = x^q*y-x*y^q+z^q*w-z*w^q;
J = ideal(x^q*y-x*y^q, x^q*z-x*z^q, x^q*w-x*w^q, 
y^q*z-y*z^q, y^q*w-y*w^q, z^q*w-z*w^q);
found = false;
counter = 0;
while not found do
    {
    f = random(q+1+d, J);
    I = ideal(f, g);
    if isSmooth(I) and dim I == 2 then 
    {
        print f;
        print counter;
        found = true;
    }
    else counter = counter + 1;
    }
\end{verbatim}

In the above code, for a randomly chosen polynomial $f$ in the ideal $J$, we compute the ideal $I$ generated by $f$ and $g$. We check if $C = V(f, g) = V(I)$ is smooth, and it is of dimension one. Since $\dim I$ is the affine dimension, $\dim I = \dim C + 1$. 

Table \ref{tbl:spacefillingcurve} shows examples of polynomials $f \in \FF_q[x, y, z, w]$ such that $C = V(g, f) \subset \PP^3$ is a space filling curve over $\FF_q$.

\begin{table}[!ht]
\begin{tabular}{|l|l|}\hline
$q$ & $f$\\ \hline \hline
$2$ & $(x+z)\varphi_{xz} + (y+z+w)\varphi_{xw} + (x+y+z+w)\varphi_{yz}+ (z+w)\varphi_{yw} + y\varphi_{zw}$\\ \hline
$3$ & $(-x)\varphi_{xy} + (x+y)\varphi_{xz} + (x+y-z)\varphi_{xw}$\\
& $ + (x-y-z)\varphi_{yz} + (x-w)\varphi_{yw} + (-x-y-z-w)\varphi_{zw}$\\ \hline
$4$ & $(ax + y + (a + 1)z + w)\varphi_{xy} + (ax + z + aw)\varphi_{xz}+(ax + y + (a + 1)z + aw)\varphi_{xw}$\\
& $+(ax + y + z + aw)\varphi_{yz} + (y + (a + 1)z + w)\varphi_{yw} + (x + y + az)\varphi_{zw}$\\ \hline
$5$ & $(-x+2y+2z)\varphi_{xy}+(x+3z-2w)\varphi_{xz}+(-x+w)\varphi_{xw}$\\
& $+ (-2x-y-2z-w)\varphi_{yz}+(-2x+y+w)\varphi_{yw} +(2x+2y+z+w)\varphi_{zw}$\\ \hline
$7$ & $(- x - y - z - 3w)\varphi_{xy} + (z - 2w)\varphi_{xz} + (- x - 2y + z + w)\varphi_{xw}$\\
& $+ (3x + y)\varphi_{yz} + (- x - 2y + z - w)\varphi_{yw} + z\varphi_{zw}$\\ \hline
\end{tabular}
\caption{Examples of polynomials defining space filling curves. For a field of non-prime order, $a$ is a cyclic generator of $\FF_q^*$.}\label{tbl:spacefillingcurve}
\end{table}

\begin{remark}\label{rmk:rarity}
As one may guess, the polynomials that induce space filling curves are very rare. For $q = 2$, approximately only $0.264\%$ of random sample polynomials define space filling curves. We obtained examples in Table \ref{tbl:spacefillingcurve} after many failed attempts. The example for $q = 3$ was obtained after $1,482$ tries, and the $q = 7$ examples was obtained after $29,923$ tries. For $q = 8$, $100,000$ tries did not make an example.
\end{remark}

\begin{remark}\label{rmk:largerdegreespacefillingcurves}   
The similar random construction method can be used to construct a smooth complete intersection space filling curve with larger degree. Numerically, we observed that, at least for small $q$, we obtained examples of a smooth space filling curve faster.
\end{remark}


\section{Toward the existence of space filling curves of low degree}\label{sec:existence}

Unfortunately, we were unable to answer Question \ref{que:minimumdegree} completely. In this section, we leave our approach and challenge, and why we expect such a low degree space filling curve exists. 

In Lemma \ref{lem:spacefillingsurfacep+1}, we showed that $g := \varphi_{xy} + \varphi_{zw}$ defines a space filling surface $V(g)$ of degree $q+1$. We need to show that there is a space filling surface $V(f)$ of degree $q+2$ such that $C := V(f, g)$ is smooth curve. Since $V(f, g)$ is a complete intersection, it is connected \cite[II.8.Ex.4]{Har77}, hence the irreducibility follows. 

Such an $f$ must be a degree $p+2$ polynomial in $J$. Thus, there are linear polynomials $\ell^{xy}, \cdots, \ell^{zw} \in \FF_q[x, y, z, w]_1$ such that 
\[
    f = \ell^{xy}\phi_{xy} + \ell^{xz} \phi_{xz} + \cdots + \ell^{zw} \phi_{zw}.
\]
Let $\ell^{xy} = a^{xy}x + b^{xy}y + c^{xy}z + d^{xy}w$, and so on. Then we can understand $f$ as an element of
\[
    \FF_q[a^{xy}, b^{xy}, \cdots, d^{zw}, x, y, z, w].
\]
Furthermore, if we divide the set of variables into two sets $\{a^{xy}, \cdots, d^{zw}\} \sqcup \{x, y, z, w\}$, $g$ is a bihomogeneous of degree $(1, q+2)$. Thus, $f$ defines a hypersurface of bidegree $(1, q+2)$ in $\PP^{23} \times \PP^3$, where $\PP^{23}$ is a projective space with homogeneous coordinates $(a^{xy}: b^{xy}: \cdots : d^{zw})$ and $\PP^3$ is a projective space with homogeneous coordinates $(x:  y : z: w)$. We may understand $g$ as a bihomogeneous polynomial of bidegree $(0, q+1)$. 

Take the gradient vectors $\nabla f$ and $\nabla g$ of $f$ and $g$, with respect to $(x, y, z, w)$. Let $M$ be a $2 \times 4$ matrix in \eqref{eqn:Mmatrix} and $D_{ij}$ be the determinant of $2 \times 2$ minors of $i$-th and $j$-th columns. Then $D_{ij}$ is also a bihomogeneous polynomial of degree $(1, 2q+1)$. 

For any point (not necessarily an $\FF_q$-rational point) $\bv = (a^{xy} : \cdots : d^{zw}) \in \PP^{23}$, let $f_\bv \in \FF_q[x, y, z, w]$ be a polynomial obtained by evaluating $\bv$, that is, $f_\bv(x, y, z, w) := f(\bv, x, y, z, w)$. Let $U = V(f, g, D_{ij}) \subset \PP^{23} \times \PP^3$ be a biprojective scheme defined by $f$, $g$, and $D_{ij}$. For the projection $\pi : \PP^{23} \times \PP^3 \to \PP^{23}$ to the first factor, let $W := \pi(U) \subset \PP^{23}$. Then $U$ and $W$ have the following geometric interpretation. First of all, we may interpret $\PP^{23}$ as a parameter space of surfaces of degree $q+2$ that interpolates all points in $\PP^3(\FF_q)$, as every point $\bv = (a^{xy}, \cdots, d^{zw}) \in \PP^{23}$ defines such a surface $V(f_\bv) \subset \PP^3$ over some extension field $K$ of $\FF_q$. (We do not use the word space filling surface here, because it requires that every coefficient of the defining equations must be in $\FF_q$.) Thus, for $\bv := (a^{xy}, \cdots, d^{zw}) \in W$, its fiber $\pi^{-1}(\bv)$ is naturally identified with $V(f_\bv, g, D_{ij}) \subset \PP^3$, which is precisely the singular locus of $V(f_\bv, g)$. In other words, $\bv \in W = \pi(U)$ if and only if the scheme $V(f_\bv, g)$ is singular. So there is a nonsingular space filling curve $V(f_\bv, g)$, defined over $\FF_q$, if and only if $W(\FF_q) \ne \PP^{23}(\FF_q)$. In other words, if $W$ is not a space filling scheme!

From the Jacobian computation, one can check that $\mathrm{codim}\; U = 7$ (The number of equations is 8, but $\{D_{ij}\}$ are not algebraically independent -- there is one Pl\"ucker relation between them). Since there is a complete intersection $V(f_\bv, g)$ with one isolated singularity, the map $\pi : U \to W$ is a birational map. Therefore, $W$ is a proper subscheme of $\PP^{23}$ of codimension $4$.

As we may guess, space filling schemes are very rare. The probability that we have a smooth space filling complete intersections were evaluated in \cite[Theorem 1.2]{Poo04} and \cite[Corollary 1.3]{BK12} and its formula involves the zeta function of the ambient projective space. The only non asymptotic example we are aware of is the case of $0$-dimensional scheme in $\AA^1$. In \cite{JMW23}, the probability distribution of number of $\FF_q$-points of a random polynomial $f$ is achieved. As a consequence, we can show that the probability that a randomly chosen scheme $V(f)$ defined by a degree $d \ge q$ polynomial is a space filling scheme is $1/q^q$. See also Remark \ref{rmk:rarity}. So we may expect, though we do not have a rigorous proof, that $W$ is not a space filling scheme and there is an example of a space filling curve of minimal degree.

To show that $W$ is not a space filling scheme, we have tried several approaches. First of all, note that any space filling hypersurface must be of degree at least $q+1$ by Lemma~\ref{lem:idealofpoints}. Thus, an affirmative answer to the following question implies the existence of a space filling curve of degree $(q+1)(q+2)$.

\begin{question}
Can we find any degree $d \le q$ homogeneous polynomial $h \in \FF_q[a^{xy}, \cdots, d^{zw}]$ such that $W \subset V(h)$?
\end{question}

If $W$ is a space filling scheme and $L$ is an $\FF_q$-linear subspace of $4$-dimensional subspace of $\PP^{23}$ that intersects $W$ properly (But we do not know such a subspace exists!), $\deg W \ge |(L \cap W)(\FF_q)| = |L(\FF_q)| = (q^5-1)/(q-1)$. Since $W = \pi(U)$ and the map $\pi : U \to W$ is birational, we may compute the degree of $W$ from the multidegrees of $U$. This is not entirely obvious because $U$ is not a complete intersection in $\PP^{23} \times \PP^3$. But we expect that the degree is larger than $(q^5-1)/(q-1)$, from the degree formula of complete intersections in a biprojective space \cite[Example 8.4.2]{Ful98}, so this approach might be inconclusive. 

\begin{question}
Another possible approach is to find a formula of $f$ that works for arbitrary field $\FF_q$. Can we find such a nice formula?
\end{question}

\end{document}